  \def\F{\mathbb F}
  \def\a{\alpha}
  \def\e{\epsilon}
  \def\la{\langle}
  \def\ra{\rangle}
  \def\no{\noindent}
\theoremstyle{plain}
	\newtheorem{theorem}{Theorem}[section]
	\newtheorem{lemma}[theorem]{Lemma}
\theoremstyle{definition}
\numberwithin{equation}{section}
\begin{document}

\title{Chiral polyhedra and finite simple groups}
\author{Dimitri Leemans}
\address{Dimitri Leemans, University of Auckland,
Department of Mathematics,
Private Bag 92019,
Auckland, New Zealand}
\email{d.leemans@auckland.ac.nz}

\author{Martin W. Liebeck}
\address{Martin W. Liebeck, Department of Mathematics, Imperial College,
London SW7 2AZ, UK}
\email{m.liebeck@imperial.ac.uk}

\date{\today}

\begin{abstract}
We prove that every finite non-abelian simple group acts as the automorphism group of a chiral polyhedron, apart from the groups $PSL_2(q)$, $PSL_3(q)$, $PSU_3(q)$ and $A_7$.
\end{abstract}
\maketitle

{\bf Keywords:} finite simple groups, abstract chiral polyhedra, regular maps.

\section{Introduction}\label{intro}
Polyhedra and their generalisations to higher ranks, polytopes, are certain ranked partially ordered sets generalising geometric objects that have been studied since the Greeks (see~\cite[Chapter 1]{MS2002}).
Those polytopes whose automorphism group acts transitively on maximal flags are called regular. They have maximum possible rotational and reflectional symmetries. Those that are chiral have maximum rotational symmetries but no reflections. It is already known which finite simple groups are automorphism groups of abstract regular polyhedra (see below for more details).
The main purpose of this article is to determine which finite simple groups are automorphism groups of chiral polyhedra.

In order to state the main results we require precise definitions of the above terms.
Following~\cite{MS2002}, an (abstract) {\em polytope} $({\mathcal P},\leq)$ {\em of rank $n$} is a partially ordered set with a rank function ranging from $-1$ to $n$ and satisfying the following properties. The elements of rank $i$ are called the $i$-faces of $\mathcal P$.
There exists a unique least face $F_{-1}$ and a unique greatest face $F_n$. The {\em flags} are the maximal totally ordered subsets of $\mathcal P$ and they must all contain exactly $n+2$ faces.
Two flags $\Phi$ and $\Psi$ are called {\em adjacent} if they differ in exactly one face. They are called $i$-adjacent  if this face is an $i$-face.
The poset $\mathcal P$ must be {\em strongly connected}, that is, every pair of flags must be connected by a path of adjacent flags in $\mathcal P$. Finally we require that for any $(i-1)$-face $F$ and any $(i+1)$- face $G$ of $\mathcal P$ such that $F\leq G$, there are exactly two $i$-faces between $F$ and $G$.
If the rank of $\mathcal P$ is 3, we call $\mathcal P$ a {\em polyhedron}.

An automorphism of $({\mathcal P},\leq)$ is a bijection of the faces of $\mathcal P$ that preserves the order $\leq$. The set of all automorphisms of $({\mathcal P},\leq)$ with composition forms a group called the {\em automorphism group} of $({\mathcal P},\leq)$ and denoted $\Gamma({\mathcal P})$.
If $\Gamma({\mathcal P})$ has a unique orbit on the flags of ${\mathcal P}$, we say that $\mathcal P$ is {\em regular}. If $\Gamma({\mathcal P})$ has two orbits such that any two adjacent flags belong to distinct orbits, we say that $\mathcal P$ is {\em chiral}.

As defined for instance in~\cite{MS2002}, a C-{\it group} is a group $G$ generated by pairwise distinct involutions $\rho_0,\ldots,\rho_{n-1}$ which satisfy the following {\it intersection property}:
\[
\forall J, K \subseteq \{0,\ldots,n-1\}, \langle \rho_j \mid j \in J\rangle \cap \langle \rho_k \mid k \in K\rangle = \langle \rho_j \mid j \in J\cap K\rangle.
\]
A C-group $(G, \{\rho_0,\ldots, \rho_{n-1}\})$ is a {\it string} C-group if its generators satisfy the following relations:
\[ 
(\rho_j\rho_k)^2 = 1\; \forall j, k \in \{0,\ldots n-1\} \hbox{ with } \mid j-k\mid \geq 2.
\]
In~\cite{MS2002} it is shown that string C-groups and abstract regular polytopes are in one-to-one correspondence. Every string C-group gives an abstract regular polytope and, given an abstract regular polytope and one of its base flags, one can construct a set of distinguished generators that, together with the automorphism group of the polytope, give a string C-group. In particular, the automorphism group of an abstract regular polyhedron is generated by three involutions
$\rho_0,\rho_1,\rho_2$, two of which commute (namely, $\rho_0,\rho_2$). 
In 1980, it was asked in the Kourovka Notebook (Problem 7.30) which finite simple groups have this property. 
This was solved by Nuzhin and others in \cite{n1,n2,n3,n4,Mazurov03}: every non-abelian finite simple group can be generated by three involutions, two of which commute, with the following exceptions:
\[
\begin{array}{l}
PSL_3(q),\,PSU_3(q),\,PSL_4(2^n), \,PSU_4(2^{n}),\\
A_6,\,A_7,\,M_{11},\, M_{22}, \,M_{23}, \,McL.
\end{array}
\]
Thus every finite simple group, apart from the above exceptions, is the automorphism group of an abstract regular polyhedron.

Similarly, in~\cite{SW1991}, it is shown that for a finite group $G$, the chiral polyhedra having $G$ as automorphism group are in bijective correspondence with pairs $x,t \in G$ satisfying the following conditions:
\begin{itemize}
\item[{\rm (i)}] $G = \la x,t\ra$;
\item[{\rm (ii)}] $t$ is an involution;
\item[{\rm (iii)}] there is no involution $\a \in {\rm Aut}(G)$ such that $x^\a = x^{-1}$, $t^\a = t$.
\end{itemize}
Our main result classifies those finite simple groups $G$ possessing such generators:

\begin{theorem}\label{main1}
Let $G$ be a non-abelian finite simple group, not $A_7$, $PSL_2(q)$, $PSL_3(q)$ or $PSU_3(q)$. Then there exist $x,t \in G$ such that the following hold:
\begin{itemize}
\item[{\rm (i)}] $G = \la x,t\ra$;
\item[{\rm (ii)}] $t$ is an involution;
\item[{\rm (iii)}] there is no involution $\a \in {\rm Aut}(G)$ such that $x^\a = x^{-1}$, $t^\a = t$.
\end{itemize}
\end{theorem}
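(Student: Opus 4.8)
The plan is to apply the classification of finite simple groups and handle the alternating groups, the sporadic groups, and the groups of Lie type separately, in each case producing an explicit pair $(x,t)$. The first step I would carry out is a general reduction for condition (iii). If $\a\in\Aut(G)$ satisfies $x^\a=x^{-1}$, then $\a$ normalises $\la x\ra$, and $\a^2$ centralises both $x$ and $t$, hence centralises $G$, so $\a$ has order dividing $2$; moreover $\a=1$ is impossible, since $x=x^{-1}$ would make $\la x,t\ra$ dihedral. Thus the automorphisms inverting $x$ form, if non-empty, a single coset of $C_{\Aut(G)}(x)$ inside $N_{\Aut(G)}(\la x\ra)$, and each of them is an involution. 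Condition (iii) therefore reduces to choosing $x$ so that $C_{\Aut(G)}(x)$ is small and this coset is explicit, and then choosing the involution $t$ to lie outside the centralisers of the (finitely many) inverting involutions; in some families one finds that every inverting element actually has order $4$, and then (iii) holds for any admissible $t$.

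For $G=A_n$ with $n\ge 8$ (the groups $A_5\cong\PSL_2(4)$, $A_6\cong\PSL_2(9)$ and $A_7$ being excluded from the statement) I would take $x$ to be an $n$-cycle if $n$ is odd and an $(n-1)$-cycle if $n$ is even, so that $x\in A_n$, together with a double transposition $t$ whose support meets that of $x$ asymmetrically. Since $\Aut(A_n)=S_n$, the group $N_{S_n}(\la x\ra)$ is an explicit metacyclic group, and I would check that the reflections in it that invert $x$ do not fix the partition of $\{1,\dots,n\}$ determined by $t$. For (i): $\la x,t\ra$ is transitive, and once the few imprimitive configurations are ruled out by the position of $t$ it is primitive, so the classical bounds of Jordan on the minimal degree of a primitive group give $\la x,t\ra\in\{A_n,S_n\}$, whence $\la x,t\ra=A_n$ by a parity count. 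Small values of $n$, and $A_7$, I would settle by direct machine computation, which in particular confirms $A_7$ as a genuine exception.

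For the groups of Lie type---the main part of the work---I would take $x$ to generate a large cyclic torus $T_0$ of $G$ (a Coxeter or Singer-type torus), regular and chosen so that no field automorphism centralises it, so that $C_{\Aut(G)}(x)=T_0$. There are then at most $|T_0|$ automorphisms inverting $x$; if some are involutions (otherwise (iii) is immediate), each has a proper centraliser in $G$, and a counting argument shows that, once the Lie rank or $q$ is large enough, there remains an involution $t$ lying neither in the union of those centralisers nor in any of the finitely many, explicitly classified maximal subgroups of $G$ containing $x$; any such $t$ gives the required pair. Diagonal, field and graph automorphisms need no separate treatment, as the inverting coset is computed inside the whole of $\Aut(G)$. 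The finitely many small groups outside the range of these estimates, together with all the sporadic groups, I would treat computationally from their known conjugacy classes and maximal subgroups.

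The step I expect to be the main obstacle is this Lie type analysis, because the two requirements pull in opposite directions: guaranteeing $\la x,t\ra=G$ wants $x$ to lie in as few maximal subgroups as possible, while defeating the inverting automorphism wants tight control of $N_{\Aut(G)}(\la x\ra)$ and of the involution centralisers, and the uniform estimates degrade precisely for the groups of small rank and over small fields, where ad hoc and partly computational arguments are needed. This is also where the exceptions in the statement come from: for $\PSL_3(q)$ and $\mathrm{PSU}_3(q)$ the graph (or graph--field) automorphism, adjusted by an inner element, inverts $x$ and can always be arranged to fix $t$; for $\PSL_2(q)$ the possible orders of $x$ are too restricted for (iii) ever to hold; and $A_7$ fails on inspection. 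Hence $\PSL_2(q)$, $\PSL_3(q)$, $\mathrm{PSU}_3(q)$ and $A_7$ are exactly the non-abelian finite simple groups for which no pair $(x,t)$ satisfying (i)--(iii) exists.
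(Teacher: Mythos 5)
Your plan follows the paper's proof essentially step for step: the same CFSG case division, the same choice of $x$ generating a large cyclic (Coxeter/Singer-type) maximal torus with $C_{\Aut(G)}(x)$ equal to that torus, the same counting of involutions in $G$ against the union of the centralizers of the inverting involutions and of the maximal overgroups of $x$, the same explicit $n$-cycle/$(n-1)$-cycle plus double-transposition pairs for $A_n$, and the same computational treatment of the sporadic groups and the small-rank, small-field Lie type cases. The only divergences are cosmetic (e.g.\ verifying generation in $A_n$ via primitivity and Jordan-type minimal-degree bounds rather than citing Conder's results), together with your closing claim that the four excluded families are genuine exceptions, which is not part of the statement being proved and which the paper itself defers to Macbeath--Singerman, a computer search, and a forthcoming paper.
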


As a consequence, for every nonabelian finite simple group $G$ except $A_7$, $PSL_2(q)$, $PSL_3(q)$ or $PSU_3(q)$, there exists an abstract chiral polyhedron having automorphism group $G$.
This result was already known to be true for the Suzuki groups~\cite{JonesSilver93, HL2014}, the small Ree groups~\cite{Jones94}, the alternating groups (see Section~\ref{alt} for details) and some small sporadic groups~\cite{HHL2012}.

We now discuss the exceptions in Theorem \ref{main1}. 
The groups $PSL_2(q)$ do not have pairs of elements $x$, $t$ satisfying (i)--(iii) of the theorem; this a consequence of a result of Macbeath~\cite{Macbeath},  as observed by Singerman~\cite[Theorem 3]{Singerman}. 
For the group $A_7$, an exhaustive computer search shows that no pair of elements $x$, $t$ satisfying (i)--(iii) of Theorem~\ref{main1} exists -- see ~\cite{HHL2012}.

Thus it remains to consider the groups $PSL_3(q)$ and $PSU_3(q)$; it will be shown that these also do not possess generators $x,t$  satisfying conditions (i)--(iii) in a forthcoming paper~\cite{LL2016b}.

The rest of the paper is devoted to the proof of Theorem \ref{main1}.
This is divided into four cases: namely,  
the case where $G$ is of exceptional Lie type (Section~\ref{excep}), 
the case where $G$ is classical (Section~\ref{class}), 
the case where $G$ is an alternating group (Section~\ref{alt}), and the case where 
$G$ is a sporadic group (Section~\ref{sporadic}).

Observe that chiral polyhedra (and regular polyhedra) may be seen also as regular maps.
In a recent paper~\cite{JonesNew}, Gareth Jones has studied much further the link between automorphism groups of edge-transitive maps and finite simple groups.

\section{Preliminaries}

In this section we prove some lemmas needed for the proof of Theorem~\ref{main1},
The first two lemmas are straightforward hence we omit their proofs.

\begin{lemma}\label{inv}
Let $G$ be a finite group, and suppose $x,t\in G$ satisfy $x^t = x^{-1}$. Then the set $\{y\in G : x^y = x^{-1}\}$ is contained in the coset $C_G(x)t$.
\end{lemma}

\begin{lemma}\label{maxconj}
Let $G$ be a finite group with a subgroup $A$. Suppose $M$ is a maximal subgroup of $G$ containing $A$, such that any two $G$-conjugates of $A$ that are contained in $M$ are $M$-conjugate. Then the number of $G$-conjugates of $M$ containing $A$ is $|N_G(A):N_M(A)|$.
\end{lemma}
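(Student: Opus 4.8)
The plan is to exhibit the set $\Sigma := \{M^g : g\in G,\ A\le M^g\}$ of $G$-conjugates of $M$ containing $A$ as a \emph{single orbit} of the conjugation action of $N_G(A)$, and then read off the count from the orbit--stabiliser theorem. First I would check that $N_G(A)$ genuinely acts on $\Sigma$: if $A\le M^g$ and $h\in N_G(A)$, then $A=A^h\le (M^g)^h=M^{gh}$, so $M^{gh}\in\Sigma$; and $M=M^1\in\Sigma$ since $A\le M$ by hypothesis, so $M$ may be used as a base point.

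The heart of the argument is transitivity, and this is exactly where the hypothesis on $M$ is used. Given any $M^g\in\Sigma$, both $A$ and $A^{g^{-1}}$ are $G$-conjugates of $A$ contained in $M$ (the latter because $A\le M^g$). By assumption they are $M$-conjugate, say $A^{g^{-1}}=A^m$ with $m\in M$; a one-line manipulation then gives $mg\in N_G(A)$, while $M^{mg}=M^g$ because $m\in M$. Hence $M^g$ lies in the $N_G(A)$-orbit of $M$, proving that $N_G(A)$ is transitive on $\Sigma$.

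Finally, the stabiliser of the point $M$ under $N_G(A)$ is $N_G(A)\cap N_G(M)$. Since $M$ is a maximal subgroup of $G$ it is self-normalising, $N_G(M)=M$ — here one uses that $M$ is not normal in $G$, which holds in the intended applications where $G$ is simple — so the stabiliser equals $N_G(A)\cap M=N_M(A)$. The orbit--stabiliser theorem then yields $|\Sigma|=|N_G(A):N_M(A)|$, as required. The only point needing care is this self-normalising claim (without it, e.g.\ for normal $M$, the stated formula can fail) together with keeping the left/right conjugation bookkeeping straight in the transitivity step; everything else is routine.
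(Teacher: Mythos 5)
Your proof is correct; the paper omits the proof of this lemma as ``straightforward'', and your orbit--stabiliser argument (transitivity of $N_G(A)$ on the conjugates of $M$ containing $A$, via the hypothesis that $G$-conjugates of $A$ inside $M$ are $M$-conjugate) is exactly the natural one. Your caveat about self-normalisation is well taken: as literally stated the lemma fails for normal $M$ (e.g.\ $G=S_3$, $M=A_3$, $A=1$ gives one conjugate of $M$ containing $A$ but $|N_G(A):N_M(A)|=2$), though in every application in the paper $G$ is simple, so $N_G(M)=M$ holds automatically.
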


For a finite group $G$ and a positive integer $r$, denote by $I_r(G)$ the set of elements of order $r$ in $G$, and let $i_r(G) = |I_r(G)|$.

The next two lemmas provide upper and lower bounds for the numbers of involutions $i_2(G)$ in groups of Lie type.
Recall that a simple group of Lie type over a finite field $\F_q$ can be written as $(\bar G^F)'$, the derived group of the fixed point group of a Frobenius endomorphism $F$ of the corresponding simple adjoint algebraic group $\bar G$ over 
$\bar\F_q$.  

We use the notation $L_n^\e(q)$ (where $\e = \pm$) to denote $PSL_n(q)$ when $\e=+$, and $PSU_n(q)$ when $\e=-$. Similarly, $E_6^\e(q)$ denotes $E_6(q)$ when $\e=+$ and $^2\!E_6(q)$ when $\e=-$.

\begin{lemma}\label{upperinv}
Let $G$ be a finite simple group of Lie type over $\F_q$, and write $G = (\bar G^F)'$ as above. Let $r$ be the rank of $\bar G$, and $N$ the number of positive roots in the root system of $\bar G$. Define
\[
M_G = \left\{\begin{array}{l} \frac{1}{2}(N+r), \hbox{ if }G \hbox{ is of type }^2\!F_4,\, ^2\!G_2 \hbox{ or }^2\!B_2, \\
N+r,\hbox{ otherwise.}\end{array} \right.
\]
Then $i_2({\rm Aut}(G)) < 2\left(q^{M_G}+q^{M_G-1}\right)$.
\end{lemma}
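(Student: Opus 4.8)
The plan is to bound the number of involutions in $\Aut(G)$ by splitting them into inner and outer classes, and counting each class via the standard formula $|x^G| = |G|/|C_G(x)|$ together with dimension estimates for centralisers coming from the algebraic group $\bar G$. First I would recall that $\Aut(G)$ is built from $\bar G^F$ (inner-diagonal automorphisms) together with field and graph automorphisms, and that $|\bar G^F| \le q^{N+r}(1 + q^{-1})^r < q^{N+r} \cdot 2$ (more precisely, $|\bar G^F| = q^N \prod (q^{d_i} - \e_i) \le q^{N+r}$, with a small correction). So it suffices to show that the proportion of involutions in each relevant coset is small enough; the dominant contribution will come from a single class of involutions whose centraliser has the largest possible dimension.

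The key step is the following: for an involution $x$ in $\bar G^F$ (or in a coset $\bar G^F \tau$ with $\tau$ a graph or field automorphism), the centraliser $C_{\bar G}(x)$ is a reductive subgroup of $\bar G$ of maximal rank $r$, and hence $\dim C_{\bar G}(x) = r + (\text{number of roots in a subsystem})$; the subsystem has at least... well, one wants $\dim C_{\bar G}(x) \ge r + 2$ roughly, but the sharp statement needed is that the \emph{smallest} centraliser among involutions still has dimension at least something like $2(N+r) - 2(M_G) $ — equivalently, that $|x^G| \le $ (roughly) $q^{2M_G}$ forces a matching lower bound on $\dim C$. Concretely, I would use: $|x^{\bar G^F}| \le c\, q^{\dim \bar G - \dim C_{\bar G}(x)}$ for an explicit small constant $c$, and then establish that for \emph{every} involution, $\dim \bar G - \dim C_{\bar G}(x) \le N + r$ in the untwisted case (and $\le \tfrac12(N+r)$ for the very twisted Suzuki/Ree families, which accounts for the case distinction in the definition of $M_G$). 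Summing the sizes of the (boundedly many) involution classes, each at most $\sim q^{M_G}$, with the number of classes absorbed into the factor $2$ and the lower-order term $q^{M_G - 1}$, gives the bound. For outer involutions one argues similarly inside $C_{\bar G}(\tau)^{F}$, whose dimension is again controlled and smaller, so these do not affect the leading term.

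The main obstacle I anticipate is making the constants genuinely work out to the clean bound $2(q^{M_G} + q^{M_G - 1})$ uniformly across all types, rather than some messier $c_G \cdot q^{M_G}$: one must be careful that (a) the number of involution classes in $\Aut(G)$, multiplied by the per-class centraliser-index bound, does not overrun the factor $2$; (b) the contributions of diagonal, field and graph-field automorphisms of order $2$ are all dominated by the inner term; and (c) small-rank and small-$q$ cases, where the asymptotic estimates are loosest, are checked directly (e.g. by consulting known involution-class data or \cite{MS2002}-style tables). I would handle (c) by listing the finitely many groups with $r$ small and $q$ small and verifying the inequality by hand or by appeal to existing tables of conjugacy classes, and handle (a)--(b) by the crude observation that every involution centraliser contains a maximal torus-normaliser piece, forcing $\dim C_{\bar G}(x)$ up and hence keeping each class size safely below $q^{M_G}$ with room to spare for the summation.
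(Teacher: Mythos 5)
The paper does not actually prove this lemma: the proof is the single line ``This is \cite[Prop.~1.3]{LLS}'', so there is no in-paper argument to compare with. Your outline is a reasonable reconstruction of the kind of argument used in that reference --- sum over the (boundedly many) involution classes of ${\rm Aut}(G)$, bound each class size by a constant times $q^{\dim\bar G-\dim C_{\bar G}(x)}$, and then bound $\dim\bar G-\dim C_{\bar G}(x)$ uniformly by $2M_G - (N+r)$-type quantities; the halving for $^2\!B_2$, $^2\!G_2$, $^2\!F_4$ does indeed come from the fact that $|G|\approx q^{(2N+r)/2}$ for those families. So the overall strategy is the right one.

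However, as written your plan has a genuine flaw and a deferred core. The flaw: you assert that $C_{\bar G}(x)$ is a \emph{reductive subgroup of maximal rank}, and later that ``every involution centraliser contains a maximal torus-normaliser piece''. Both statements are false in characteristic $2$, where involutions are unipotent and their centralisers are non-reductive of shape $[q^a].H$ with no maximal torus inside (see the even-$q$ rows of Tables \ref{tb1} and \ref{tb1cla} in this paper, e.g.\ $[q^{84}].B_4(q)$ in $E_8(q)$). This is not a peripheral case one can wave away: the unipotent involution classes in even characteristic are precisely the ones that make the bound essentially sharp (compare Lemma \ref{lowerinv}, whose lower bound $\frac14 q^{N_G}$ with $N_G=M_G$ in the non-graph cases is attained by such classes), so the mechanism you propose for lower-bounding $\dim C_{\bar G}(x)$ breaks exactly where the estimate is tightest. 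The deferred core: the inequality $\dim\bar G-\dim C_{\bar G}(x)\le M_G$ (in the appropriate normalisation) for \emph{every} involution of ${\rm Aut}(G)$ --- inner, diagonal, graph, field and graph-field, in both odd and even characteristic --- together with the bookkeeping needed to arrive at the clean constant $2(q^{M_G}+q^{M_G-1})$ rather than some $c_G\,q^{M_G}$, is the entire content of the proposition; your plan states it as a target rather than establishing it. A complete proof has to run through the involution class data (\cite{AS} for $q$ even, \cite[Section 4.5]{GLS} for $q$ odd, plus the outer cosets), which is what the cited reference actually does.
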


\begin{proof}This is \cite[Prop. 1.3]{LLS}.
\end{proof}

\begin{lemma}\label{lowerinv}
Let $G$ be a finite simple group of Lie type over $\F_q$.
\begin{itemize}
\item[{\rm (i)}] If $G$ is of classical type, then $i_2(G) > \frac{1}{4}q^{N_G}$, where $N_G$ is defined as follows:
\[
\begin{array}{|l|l|}
\hline
G & N_G \\
\hline
L_n^\e(q)\,(\e = \pm) & [\frac{1}{2}n^2] \\
PSp_{2m}(q),\,P\Omega_{2m+1}(q) & m^2+m \\
P\Omega_{2m}^\e(q)\,(\e = \pm) & m^2-1 \\
\hline
\end{array}
\]
\item[{\rm (ii)}]  If $G$ is of exceptional type, then with one exception $i_2(G) > \frac{1}{2}q^{N_G}$, where $N_G$ is defined below; the exception is $G=E_7(q)$, in which case $i_2(G) > \frac{1}{4}q^{N_G}$.
\[
\begin{array}{|l|lllllllll|}
\hline 
G & E_8(q) & E_7(q) & E_6^\e(q) & F_4(q) & G_2(q) & ^2\!F_4(q) & ^2\!G_2(q) & ^2\!B_2(q) & ^3\!D_4(q) \\
\hline
N_G & 128 & 70 & 40& 28& 8 & 14 & 4& 3& 16 \\
\hline
\end{array}
\]
\end{itemize}
\end{lemma}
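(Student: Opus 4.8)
The plan is to establish both parts of Lemma~\ref{lowerinv} by exhibiting explicitly a large family of involutions in $G$ and counting it from below. For each family type (classical or exceptional), I would locate a single involution class $C$ in $G$ (or in the algebraic group $\bar G$) whose centralizer $C_{\bar G}(t)$ has dimension as small as possible, so that the $\bar G^F$-orbit, having size roughly $q^{\dim \bar G - \dim C_{\bar G}(t)}$, is as large as possible. The quantity $\dim \bar G - \dim C_{\bar G}(t)$ is exactly the dimension of the unipotent radical part one expects, and the exponents $N_G$ in the tables are precisely these orbit dimensions (for instance, in $L_n^\e(q)$ the involution $\mathrm{diag}(-I_{\lfloor n/2\rfloor}, I_{\lceil n/2\rceil})$ has centralizer of type $GL_{\lfloor n/2\rfloor}\times GL_{\lceil n/2\rceil}$, giving orbit dimension $2\lfloor n/2\rfloor\lceil n/2\rceil = [\tfrac12 n^2]$; similarly $-I_{2m}$-type involutions or reflection-type involutions in the symplectic and orthogonal cases give $m^2+m$ and $m^2-1$).

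First I would recall Steinberg's formula: if $t\in \bar G^F$ is semisimple then the number of $\bar G^F$-conjugates of $t$ equals $|\bar G^F : C_{\bar G}(t)^F|$, and since $|C_{\bar G}(t)^F| \le (q+1)^{\dim C_{\bar G}(t)}$ (a crude but sufficient bound on the order of the fixed points of a connected reductive group) while $|\bar G^F| \ge q^{\dim \bar G}$ roughly, one gets a conjugacy class of size at least $c\, q^{\dim \bar G - \dim C}$ for an absolute constant $c$. Working this out carefully for the chosen class in each Lie type, and then passing from $\bar G^F$ to the simple group $G = (\bar G^F)'$ (an index dividing a small power of $\gcd$-type quantities, and in any case bounded so that it only affects the constant), yields $i_2(G) > \tfrac14 q^{N_G}$ in the classical case. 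For part (ii) I would do the same case-by-case over the nine exceptional types, reading off $\dim C_{\bar G}(t)$ for the involution with smallest centralizer from the standard tables of involution classes in exceptional algebraic groups (e.g.\ the classes with centralizers $D_8$, $A_1 E_7$, $A_1 A_5$ / $D_5 T_1$, $B_4$, $A_1\tilde A_1$, etc.), giving the listed values $128, 70, 40, 28, 8, 14, 4, 3, 16$.

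The main obstacle will be the group $E_7(q)$, which is flagged in the statement as the single exception where only the weaker bound $\tfrac14 q^{N_G}$ holds. The issue is that the relevant involution in $E_7$ has a centralizer (of type $A_1 D_6$ or $D_6 A_1$, or $E_6 T_1$) whose fixed-point order, together with the extra factor of $\gcd(2,q-1)$ coming from the center of the simply connected group and the relation between $\bar G^F$ and the simple group $G$, loses one more factor of roughly $q$ or of $2$ compared with the other cases; one must check the bound survives with constant $\tfrac14$ rather than $\tfrac12$. I would handle this by being slightly more careful with the exact order of $C_{\bar G}(t)^F$ and the index $|\bar G^F:G|$ for $E_7$, rather than using the blanket estimates. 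The twisted small-rank groups $^2\!B_2(q)$, $^2\!G_2(q)$, $^2\!F_4(q)$ also need a brief separate check because their root-system bookkeeping differs, but there the values $N_G = 3,4,14$ are small and the estimates are immediate once the centralizer of a (unique, in the first two cases) involution class is identified.

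In all cases, since the lemma only asks for a lower bound of the shape $c\,q^{N_G}$ with a fixed small constant, I would not optimize: it suffices to exhibit one class, bound $|C_{\bar G}(t)^F|$ from above by $(q+1)^{\dim C}\le (2q)^{\dim C}$ or so when $q$ is not too small, dispose of the finitely many small $q$ by noting $i_2(G)\ge$ the size of a single class which can be computed directly, and collect the resulting constant, which is comfortably above $\tfrac14$ (respectively $\tfrac12$) in every case except the noted $E_7(q)$.
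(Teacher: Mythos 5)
Your class-counting argument is essentially the proof the paper relies on: the paper's own ``proof'' is a two-line citation of \cite[Props.\ 4.1, 4.3]{LS2}, which establish exactly these bounds asymptotically via the orbit-dimension count you describe (largest involution class has size roughly $q^{\dim \bar G - \dim C_{\bar G}(t)}$), the authors merely noting that the constants can be tracked. The one point your sketch glosses over is that for $q$ even the large involution classes are unipotent rather than semisimple (e.g.\ Jordan type $(2^{\lfloor n/2\rfloor},1^{n-2\lfloor n/2\rfloor})$ in $L_n^\e(q)$, or the classes of \cite{AS} in the other types), but the same orbit--centralizer estimate applies verbatim and yields the same exponents $N_G$.
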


\begin{proof}
 An asymptotic version of this result is proved in \cite[Props. 4.1, 4.3]{LS2}. The lemma is easily verified by keeping track of the constants in that proof.
 \end{proof}

\vspace{4mm} Note that the exponents $M_G$ and $N_G$ in Lemmas \ref{upperinv} and \ref{lowerinv} are equal, except in cases where $G$ possesses an involutory graph automorphism (types $L_n^\e, P\Omega_{2m}^\e, E_6^\e$).

\section{Proof of Theorem \ref{main1} for $G$ of exceptional type}\label{excep}

Let $G$ be a finite simple group of exceptional Lie type over $\F_q$ (i.e. of type 
$E_8$, $E_7$, $ E_6^\e$, $F_4$, $G_2$, $^2\!F_4$, $^2\!G_2$, $^2\!B_2$ or $^3\!D_4$). 
Assume that $q>2$ when $G$ is of type $G_2$ or $^2\!F_4$, and $q>3$ for type $^2\!G_2$ (for the excluded groups, $G_2(2)' \cong U_3(3)$ and $^2\!G_2(3)'\cong L_2(8)$ and these will be dealt with as classical groups in the next section).

Write $G = (\bar G^F)'$, the derived group of the fixed point group of a Frobenius endomorphism $F$ of the corresponding simple adjoint algebraic group $\bar G$ over $\bar \F_q$. Let $d = |\bar G^F:G|$, so that 
\[
d = \left\{\begin{array}{l}
(2,q-1), \hbox{ if }G=E_7(q), \\
(3,q-\e), \hbox{ if }G=E_6^\e(q), \\
1,\hbox{ otherwise.}
\end{array} \right.
\]

By \cite[Section 2]{KS}, there is a cyclic maximal torus $\la x \ra$ of $G$ of order as given in Table \ref{tb1}. In the table, $\Phi_n(q)$ denotes the $n^{th}$ cyclotomic polynomial evaluated at $q$. Morover, $T = C_{\bar G^F}(x)$ is a maximal torus of $\bar G^F$ of order $d|\la x\ra|$, and also $C_{Aut(G)}(x) = T$ (see \cite[2.8(iii)]{seitz}).

Let $w_0$ be the longest element of the Weyl group $W(\bar G)$. Suppose that $w_0=-1$ (i.e. $\bar G \ne E_6$). Then the involutions in $N_{\bar G}(T)$ that invert $x$ are conjugates of $t_0$, a preimage in $N(T)$ of $w_0$. Now 
$C_{\bar G}(t_0)$ has dimension equal to the number of positive roots in the root system of $\bar G$. Hence we see from the lists of possibilities for involution centralizers in $G$, in \cite[Section 4.5]{GLS} for $q$ odd, and in \cite{AS} for $q$ even, that $C_G(t_0)$ is as given in Table \ref{tb1}. 

Similarly, if $\bar G = E_6$, the involutions in ${\rm Aut}(G)$ inverting $x$ are conjugate to $t_0$, a preimage in $N_{G\la \tau \ra}(T)$ of $w_0\tau = -1$, where $\tau$ is an involutory graph automorphism of $G$, and again $C_G(t_0)$ is as in Table \ref{tb1}.

\begin{table}[htb]
\caption{Torus $\la x\ra$ and centralizer $C_G(t_0)$, $G$ exceptional} \label{tb1}
\[
\begin{array}{|c|c|c|}
\hline
G & |\la x \ra | & C_G(t_0) \\
\hline
E_8(q) & \Phi_{30}(q) & D_8(q),\,q\hbox{ odd} \\
            &                         & [q^{84}].B_4(q), \,q\hbox{ even} \\
\hline
E_7(q) & \frac{1}{d}\Phi_{18}(q)\Phi_2(q),\,q\ge 3 & A_7^\e(q).2,\, q\equiv \e\hbox{ mod }4 \\
            & 129,\,q=2  & [q^{42}].B_3(q),\,q \hbox{ even} \\
\hline
E_6^\e(q) & \frac{1}{d}\Phi_{9}(q),\,\e=+ & C_4(q),\, q\hbox{ odd} \\
            & \frac{1}{d}\Phi_{18}(q),\,\e=- & [q^{15}].C_3(q),\,q \hbox{ even} \\
\hline
F_4(q) & \Phi_{12}(q),\,q\ge 3 & (A_1(q)C_3(q)).2,\, q\hbox{ odd} \\
            & 17,\,q=2  & [q^{18}].A_1(q)^2,\,q \hbox{ even} \\
\hline
G_2(q) & \Phi_{6}(q),\,q\ge 4 & (A_1(q)A_1(q)).2,\, q\hbox{ odd} \\
            & 13,\,q=3  & [q^{3}].A_1(q),\,q \hbox{ even} \\
\hline
^3\!D_4(q) & \Phi_{12}(q) & (A_1(q)A_1(q^3)).2,\, q\hbox{ odd} \\
            &   & [q^{9}].A_1(q),\,q \hbox{ even} \\
\hline
^2\!F_4(q) & q^2+\sqrt{2q^3}+q+\sqrt{2q}+1 & [q^{9}].A_1(q) \\
\hline
^2\!G_2(q) & q+\sqrt{3q}+1 & A_1(q)\times 2 \\
\hline
^2\!B_2(q) & q+\sqrt{2q}+1 & [q^2] \\
\hline
\end{array}
\]
\end{table}

\begin{table}[ht]
\caption{Maximal overgroups of $\la x\ra$, $G$ exceptional} \label{tb2}
\[
\begin{array}{|c|c|c|}
\hline
G & |{\mathcal M}(x)| & \hbox{Groups in }{\mathcal M}(x) \\
\hline
E_8(q) & 0 & - \\
\hline
E_7(q),\,q\ge 3 & 1 & ^2\!E_6(q).\frac{q+1}{d} \\
E_7(2) & 1 & ^2\!A_7(2).2 \\
\hline
E_6^\e(q) & 1 & A_2^\e(q^3).3 \\
\hline
F_4(q),\,q\ge 3 & (2,q) & ^3\!D_4(q).3 \\
F_4(2) & 2 & B_4(2) \\
\hline
G_2(q),\,q\ge 5 & 1 & SU_3(q).2 \\
G_2(4) & 2 & SU_3(4).2,\,L_2(13).2 \\
G_2(3) & 3 & SL_3(3).2,\, SL_3(3).2,\,L_2(13) \\
\hline
^3\!D_4(q),\,^2\!F_4(q) & 0 & - \\
^2\!G_2(q),\, ^2\!B_2(q) & 0 & - \\
\hline
\end{array}
\]
\end{table}

Let ${\rm Inv}(x)$ be the set of involutions in ${\rm Aut}(G)$ that invert $x$. By Lemma \ref{inv} we have 
${\rm Inv}(x) \subseteq Tt_0$. Define ${\mathcal M}(x)$ to be the set of maximal subgroups of $G$ containing $x$ such that $M \ne N_G(\la x\ra)$. It is shown in the proof of \cite[Prop. 6.2]{GK} that the set ${\mathcal M}(x)$ is as given in Table \ref{tb2}.

Define
\[
S = \bigcup_{\a \in Inv(x)} I_2(C_G(\a)) \cup \bigcup_{M\in {\mathcal M}(x)} I_2(M).
\]
We claim that 
\begin{equation}\label{i2gs}
i_2(G) > |S|.
\end{equation}
Given (\ref{i2gs}), there exists an involution $t\in G$ such that 

(a) $t$ lies in no maximal subgroup of $G$ containing $x$, and 

(b) $t$ centralizes no involution that inverts $x$.

\no It then follows that $x,t$ satisfy conditions (i)-(iii) of Theorem \ref{main1}, completing the proof of the theorem for exceptional groups.

So it remains to prove (\ref{i2gs}). By Lemma \ref{inv} we have $|{\rm Inv}(x)| \le |T|$, and also from the above discussion we know that $C_G(\a)$ is conjugate to $C_G(t_0)$ for all $\a \in {\rm Inv}(x)$. Hence 
\begin{equation}\label{upps}
|S| \le |T|\,i_2(C_G(t_0)) + \sum_{M \in {\mathcal M}(x)}i_2(M).
\end{equation}
Using Lemma \ref{upperinv} (and also the slightly stronger bound $i_2(A_1(q))< q^2$), we obtain the following upper bounds for $i_2(C_G(t_0))$ and for $\sum_{M \in {\mathcal M}(x)}i_2(M)$:
\[
\begin{array}{l|l|l}
G & i_2(C_G(t_0)) < &  \sum i_2(M)\le \\
\hline
E_8(q)& 2q^{84}(q^{20}+q^{19}) & 0 \\
E_7(q)  & 2q^{42}(q^{12}+q^{11})& 2(q+1)(q^{42}+q^{41}) \\
E_6^\e(q)& 2q^{15}(q^{12}+q^{11})& 2(q^{15}+q^{12}) \\
F_4(q) &   q^{22} & 2(q^{20}+q^{19}) \\
G_2(q),\,q\ge 5 &  q^5 & 4(q^{5}+q^{4}) \\
^3\!D_4(q) &  q^{11}& 0 \\
^2\!F_4(q),\,q>2 & q^{11} &0 \\
^2\!G_2(q),\,q>3 &  2q^2 & 0 
\end{array}
\]
In all these cases we check that the consequent upper bound for $|S|$ using (\ref{upps}) is less than the lower bound for $i_2(G)$ given by Lemma \ref{lowerinv}, proving (\ref{i2gs}). 

This leaves the following groups to deal with: $G = \,^2\!B_2(q)$, $G_2(3)$, $G_2(4)$ and $^2\!F_4(2)'$. In the first case, we see using \cite{suz} that $ i_2(C_G(t_0)) = q-1$, while $i_2(G) = (q-1)(q^2+1)$, so (\ref{i2gs}) holds. In the other cases we can use \cite{Con85} to obtain the precise values of $i_2(G)$, $i_2(C_G(t_0))$ and $\sum i_2(M)$, and again check that (\ref{i2gs}) holds.

This completes the proof of Theorem \ref{main1} when $G$ is an exceptional group of Lie type.

\section{Proof of Theorem \ref{main1} for $G$ classical}\label{class}

In this section we prove Theorem \ref{main1} in the case where $G$ is classical.

Let $G$ be a finite simple classical group over $\F_q$, and exclude $L_2(q)$, $L_3(q)$ and $U_3(q)$. So $G$ is one of the groups
\[
L_n^\e(q)\,(n\ge 4), \;PSp_{2m}(q)\,(m\ge 2), \;P\Omega_{2m+1}(q)\, (q \hbox{ odd}),\; P\Omega_{2m}^\e(q)\, (m\ge 4).
\]
 Let $V$ be the natural module for $G$. 
As in the previous section, write $G = (\bar G^F)'$, where $F$ is a Frobenius endomorphism of the corresponding simple adjoint algebraic group $\bar G$ over $\bar \F_q$. Define
\[
d = \left\{\begin{array}{l}
(n,q-\e), \hbox{ if }G=L_n^\e(q) \\
(2,q-1), \hbox{ if }G=PSp_{2m}(q), P\Omega_{2m+1}(q) \hbox{ or } P\Omega_{2m}^\e(q). \\
\end{array} \right.
\]

For convenience we handle first the following groups $G$:
\begin{equation}\label{exclude}
\begin{array}{l}
L_4^\e(2), \,L_4^\e(3),\,L_4^\e(4),\,L_5^\e(2),\,L_6^\e(2), \\
Sp_4(2),\,PSp_4(3),\,Sp_4(4),\,Sp_6(2),\,PSp_6(3),\,Sp_8(2), \\
\Omega_7(3), \,\Omega_8^\e(2), \,P\Omega_8^+(3),\,\Omega_{10}^\e(2).
\end{array}
\end{equation}
For these groups, generators $x,t$ as in Theorem \ref{main1} can be found by a search using {\sc Magma}~\cite{Magma}. So suppose from now on that $G$ is not one of the groups in (\ref{exclude}).

The proof follows along the same lines as the previous section. There is an element $x \in G$ of order given in Table \ref{tb1cla}; in all cases we take $x$ to act either irreducibly on the natural module $V$, or irreducibly on both summands of an orthogonal decomposition $V=V_i+ V_i^\perp$, where $\dim V_i = i\le 2$.
Then $T = C_{\bar G^F}(x)$ is a maximal torus of order $d|\la x \ra|$ and also $C_{Aut(G)}(T) = T$. Again, the involutions in ${\rm Aut}(G)$ that invert $x$ are conjugates of $t_0$, a preimage in $N(T)$ of $-1 = w_0$ or $w_0\tau$ (where $\tau$ is an involutory graph automorphism). The possibilities for $C_G(t_0)$ are also given in Table \ref{tb1cla}. (In the symplectic and orthogonal cases, $C_G(t_0)$ is either the given group, or the given group quotiented by the scalars $\la - I\ra $.)

\begin{table}[htb]
\caption{Torus $\la x\ra$ and centralizer $C_G(t_0)$, $G$ classical} \label{tb1cla}
\[
\begin{array}{|c|c|c|}
\hline
G & |\la x \ra | & C_G(t_0) \\
\hline
L_n(q), & \frac{1}{d}\frac{q^n-1}{q-1} & PSO^\e_n(q),\,q \hbox{ or }n\hbox{ odd} \\
n\ge 4                   &                                                  & [q^{n-1}]Sp_{n-2}(q),\,q \hbox{ and }n\hbox{ even} \\
\hline
U_n(q), & \frac{1}{d}\frac{q^n+1}{q+1}, n\hbox{ odd} & PSO_n(q) \\
            n\ge 4       &    \frac{1}{d}(q^{n-1}+1), n\hbox{ even} & PSO^\e_n(q),\,q \hbox{ odd} \\
                  &      & [q^{n-1}]Sp_{n-2}(q),\,q\hbox{ even} \\
\hline
PSp_{2m}(q), & \frac{1}{d}(q^m+1) & GL_m^\e(q).2,\,q\equiv \e\hbox{ mod }4 \\
                      m\ge 2              & & [q^{\frac{1}{2}(m^2+3m-2)}].Sp_{m-2}(q),\,q\hbox{ and }m \hbox{ even} \\
                                 & & [q^{\frac{1}{2}(m^2+m)}].Sp_{m-1}(q),\,q\hbox{ even},\,m\hbox{ odd} \\
\hline
P\Omega_{2m+1}(q), & \frac{1}{d}(q^m+1) & (O_m^\e(q)\times O_{m+1}^{\e'}(q))\cap G \\
m\ge 3,\,q\hbox{ odd} && \\
\hline
P\Omega_{2m}^+(q), & \frac{1}{d}(q^{m-1}+1)(q+1),\,m\hbox{ odd} & (O_m^\e(q))^2.2\cap G,\,q\hbox{ odd} \\
m\ge 4  &  \frac{1}{d}(q^{m-1}+1),\,m\hbox{ even} &  [q^{\frac{1}{2}(m^2+m-2)}].Sp_{m-2}(q),\,q\hbox{ and }m \hbox{ even} \\
                                 & & [q^{\frac{1}{2}(m^2-m)}].Sp_{m-1}(q),\,q\hbox{ even},\,m\hbox{ odd} \\
\hline
P\Omega_{2m}^-(q), & \frac{1}{d}(q^m+1) & (O_m^\e(q)\times O_m^{-\e}(q))\cap G,\,q\hbox{ odd} \\
m\ge 4  &   &  [q^{\frac{1}{2}(m^2+m-2)}].Sp_{m-2}(q),\,q\hbox{ and }m \hbox{ even} \\
                                 & & [q^{\frac{1}{2}(m^2-m)}].Sp_{m-1}(q),\,q\hbox{ even},\,m\hbox{ odd} \\
\hline
\end{array}
\]
\end{table}

\begin{table}[ht]
\caption{Maximal overgroups of $\la x\ra$, $G$ classical} \label{tb2cla}
\[
\begin{array}{|c|c|c|}
\hline
G & \hbox{Groups in }{\mathcal M}(x) & \hbox{Number} \\
\hline
L_n(q) & (GL_{\frac{n}{r}}(q^r).r)\cap G, & 1 \hbox{ for each }r \\
           &  r\hbox{ prime},\,r|n & \\
\hline
U_n(q) & (GU_{\frac{n}{r}}(q^r).r)\cap G, & 1 \hbox{ for each }r \\
           &  n\hbox{ odd},\,r\hbox{ prime},\,r|n & \\           
           & GU_{n-1}(q),\,n\hbox{ even} & 1 \\
\hline
PSp_{2m}(q) &  Sp_{\frac{2m}{r}}(q^r).r, & 1 \hbox{ for each }r \\
           &  r\hbox{ prime},\,r|m & \\           
           & O_{2m}^-(q),\,q\hbox{ even} & 1 \\
\hline
P\Omega_{2m+1}(q) & O_{2m}^-(q)\cap G & 1 \\
\hline
P\Omega_{2m}^+(q) & (O_{2m-2}^-(q)\times O_2^-(q))\cap G & 1 \\
                           & (GU_{m}(q).2)\cap G,\,m\hbox{ even} & 2 \\
                           & \Omega_7(q)\,(\hbox{irred.}),\,m=4 & d \\
\hline
P\Omega_{2m}^-(q) &  (O^-_{\frac{2m}{r}}(q^r).r)\cap G, & 1 \hbox{ for each }r \\
           &  r\hbox{ prime},\,r|m & \\           
           & (GU_{m}(q).2)\cap G,\,m\hbox{ odd} & 1 \\

\hline
\end{array}
\]
\end{table}

\begin{table}[ht]
\caption{Upper bounds for $i_2(C_G(t_0))$ and $\sum i_2(M)$, $G$ classical} \label{fol}
\[
\begin{array}{|c|c|c|}
\hline
G & i_2(C_G(t_0)) < &  \sum i_2(M)\le \\
\hline
L_n^\e (q),\,n\hbox{ even} & q^{n-1}i_2(Sp_{n-2}(q)) & 2(q+1)^2q^{\frac{1}{2}(n^2-n-4)} \\
L_n^\e (q),\,n\hbox{ odd} & i_2(PSO_{n}(q)) & 2d(n)\,(q^3+1)q^{\frac{1}{6}(n^2+3n)} \\
PSp_{2m} (q),\,m\hbox{ even} & q^{\frac{1}{2}(m^2+3m-2)}i_2(Sp_{m-2}(q)) & i_2(O_{2m}^-(q)+d(m)\,i_2(Sp_m(q^2).2) \\
PSp_{2m} (q),\,m\ge 3\hbox{ odd} & q^{\frac{1}{2}(m^2+m)}i_2(Sp_{m-1}(q)) & i_2(O_{2m}^-(q))+
d(m)\,i_2(Sp_{2m/r}(q^r)), \\
&& r \hbox{ largest prime divisor of }m \\
P\Omega_{2m}^\e (q),\,m\ge 6\hbox{ even} & q^{\frac{1}{2}(m^2+m-2)}i_2(Sp_{m-2}(q)) & 
2(q+1)^2q^{(m-1)^2-1}+ \\
&& 2(q+1)^2q^{\frac{1}{2}(m^2-m+4)} \\
P\Omega_{2m}^\e (q),\,m\ge 5\hbox{ odd} & q^{\frac{1}{2}(m^2-m)}i_2(Sp_{m-1}(q)) &  \hbox{ as above} \\
P\Omega_{2m+1}(q),\,m\ge 3,q\hbox{ odd} & i_2(O_m^\e(q)\times O_{m+1}^{\e'}(q)) & i_2(O_{2m}^-(q)) \\
\hline
\end{array}
\]
\end{table}

As in the previous section, define ${\mathcal M}(x)$ to be the set of maximal subgroups of $G$ containing $x$ such that $M \ne N_G(\la x\ra)$. We claim that ${\mathcal M}(x)$ is as in Table \ref{tb2cla}. When $\la x \ra$ is a Singer subgroup (i.e. the intersection with $G$ of a cyclic subgroup generated by a Singer cycle of $PSL(V)$), this follows from the main theorem of \cite{ber}. This covers the cases where $G = L_n(q)$, $U_n(q)$ with $n$ odd, $PSp_{2m}(q)$ and $P\Omega_{2m}^-(q)$. 
For the other cases we use \cite[Thm. 3.1]{BP}, which classifies subgroups of classical groups of orders divisible by numbers of the form $\Phi_d(q)$, where $d > \frac{1}{2}\dim V$. Working through the possible subgroups, we find that the only ones containing $x$ are those in Table \ref{tb2cla}. We calculate the number of groups in ${\mathcal M}(x)$ using Lemma \ref{maxconj}.

Again let ${\rm Inv}(x)$ be the set of involutions in ${\rm Aut}(G)$ that invert $x$, and define
\[
S = \bigcup_{\a \in Inv(x)} I_2(C_G(\a)) \cup \bigcup_{M\in {\mathcal M}(x)} I_2(M).
\]
We aim to show that $i_2(G) > |S|$, which will complete the proof of Theorem \ref{main1} when $G$ is classical.

As in (\ref{upps}) we have $|S| \le |T|\,i_2(C_G(t_0)) + \sum_{M \in {\mathcal M}(x)}i_2(M)$, and so it suffices to prove that 
\begin{equation}\label{suf}
i_2(G) > |T|\,i_2(C_G(t_0)) + \sum_{M \in {\mathcal M}(x)}i_2(M).
\end{equation}
Lemma \ref{upperinv} gives the upper bounds for $i_2(C_G(t_0))$ and for $\sum_{M \in {\mathcal M}(x)}i_2(M)$ in Table \ref{fol}, where $d(n)$ denotes the number of prime divisors of $n$. We can use these bounds together with Lemma \ref{upperinv} to get an upper bound for the right hand side of (\ref{suf}); and Lemma \ref{lowerinv} gives a lower bound for $i_2(G)$. In this way we check easily that (\ref{suf}) holds with the following possible exceptions (recalling that we have already excluded the groups in (\ref{exclude})):
\begin{itemize}
\item[(1)] $G = L_4^\e(q)$, $PSp_4(q)$ or $P\Omega_8^\e(q)$;
\item[(2)] $G = L_5^\e(3),\,L_6^\e(3),\,L_8^\e(2),\,Sp_6(4),\,PSp_8(3),\,Sp_8(4)$ or $\Omega_{12}^\e(2)$.
\end{itemize}
For the groups under (1), we show that (\ref{suf}) still holds, by improving the lower bound on $i_2(G)$ from Lemma \ref{lowerinv} and the upper bound on $i_2(C_G(t_0))$ in Table \ref{fol} by direct calculation in $G$, as follows:
\[
\begin{array}{|l|l|l|}
\hline
G & i_2(G) > & i_2(C_G(t_0)) < \\
\hline
L_4^\e(q) & \frac{1}{2}q^8 & 2q^4 \\
PSp_4(q) & \frac{1}{2}q^5(q-1) & 2q^3 \\
P\Omega_8^\e(q),\,q \hbox{ even} & \frac{1}{2}q^{16} & q^{11} \\
P\Omega_8^\e(q),\,q \hbox{ odd} & \frac{1}{8}q^{16} & 4q^{8} \\
\hline
\end{array}
\]
Using these improved bounds, it is straightforward to check that (\ref{suf}) again holds.

Finally, for the groups under (2) above, we again improve the bounds on $i_2(G)$ and $i_2(C_G(t_0))$ by direct calculation to show that (\ref{suf}) holds.

This completes the proof of Theorem \ref{main1} when $G$ is a classical group. 

\section{Proof of Theorem \ref{main1} for $G$ an alternating group}\label{alt}

The existence of a pair $x,t$ in $A_n$ satisfying (i)--(iii) of Theorem \ref{main1} for every $n\geq 8$ can be extracted from papers of Conder~\cite{Conder1980, Conder1981}, a more recent paper by Conder et al.~\cite{Conder2016}, and some easy {\sc Magma} computations.
Gareth Jones recently gave the following pairs $x,t$ in his plenary lecture at the conference "Symmetries and Covers of Discrete Objects" (Queenstown, New Zealand, February 2016). 

 For even $n\geq 8$, take 
\[x = (2, 3, \ldots, n)\mathrm{\; and \;}t=(1,2)(3,4)\]
And for odd $n\geq 9$, take 
\[x = (1, 2, \ldots, n)\mathrm{\; and \;}t=(1,2)(3,6)\]
Elementary arguments show that $\langle x,t\rangle = A_n$, and 
it is an easy exercise to show that $x,t$ also satisfy (iii) of Theorem~\ref{main1}.

\section{Proof of Theorem \ref{main1} for $G$ sporadic}\label{sporadic}

In this section, we show that each of the 26 sporadic simple groups has at least one pair $x,t$ satisfying (i)--(iii) of Theorem~\ref{main1}.
We shall use the fact that such pairs give abstract chiral polyhedra, as explained in the preamble to the theorem. 
When we give a pair $x,y$ of generators of $G$, the corresponding pair $x,\, t:=xy$ is the one satisfying conditions (i)--(iii) of Theorem~\ref{main1}.

\subsection{Mathieu groups}
The groups $M_i$ with $i=11$, $12$, $22$ were fully investigated in~\cite{HHL2012}. They respectively have 66, 118 and 242 non-isomorphic chiral polyhedra, hence they have that many pairs $x$, $t$ satisfying (i)--(iii) of Theorem~\ref{main1}.

The following generators of $M_{23}$ give a chiral polyhedron of type $\{11,15\}$ (the {\it type} being $\{p,q\}$ where $x,y$ have orders $p,q$):
\[x:= (1, 14, 17, 21, 10, 5, 2, 16, 18, 12, 8)(3, 6, 19, 22, 15, 9, 20, 23, 4, 7, 11),\]
\[y:= (1, 8, 6, 10, 21, 22, 19, 12, 11, 7, 4, 5, 3, 18, 9)(2, 23, 20, 16, 13)
(14, 15, 17),\]
and $t:=xy$. 
The pair $x,t$ satisfies (i)--(iii) of Theorem~\ref{main1}.

The following generators of $M_{24}$ give a chiral polyhedron of type $\{23,15\}$:
\[x:= (1,17,23,21,2,7,3,15,4,20,10,6,16,13,19,22,11,18,5,14,9,8,12),\]
\[y:=(1, 20, 2)(3, 7, 4, 17, 21, 5, 18, 24, 11, 22, 19, 9, 14, 23, 15)
(8, 13, 16, 10, 12),\]
and $t:=xy$.
The pair $x,t$ satisfies (i)--(iii) of Theorem~\ref{main1}.

\subsection{Janko groups}
The groups $J_1$ and $J_2$ were investigated in~\cite{HHL2012}. They have respectively 1056 and 888 non-isomorphic chiral polyhedra.

A {\sc Magma} search gave a chiral polyhedron of type $\{19,8\}$ for $J_3$. We do not give its generators here as these are permutations on 6516 points.

The group $J_4$ has $i_2(J_4) = 51,747,149,311$.
It also has a unique class of maximal subgroups containing elements of order 29.
Take $\sigma\in J_4$ of order 29. The normalizer $N_{J_4}(\langle\sigma\rangle) = C_{29}:C_{28}$ is maximal in $J_4$. We have $i_2(C_{29}:C_{28}) = 29$, and all the 29 involutions are conjugate in $J_4$.
There are two conjugacy classes $2A$ and $2B$ of involutions in $J_4$. Using the character table in Atlas~\cite{Con85},
we compute that the structure constant for the classes $2B,2B,29A$ is 1, and hence  involutions in $C_{29}:C_{28}$ are of type $2B$.

Now, the centralizer of an involution of type $2B$ has structure $2^{11}:(M_{22}:2)$. This subgroup has exactly 280831 involutions. Since $29 \cdot 280831 << i_2(J_4)$, there must exist at least one chiral polyhedron with automorphism group $J_4$.

\subsection{Conway groups}
A non-exhaustive computer search with {\sc Magma} gives a chiral polyhedron of type $\{23,23\}$ for $Co_{23}$,  one of type $\{14,23\}$ for $Co_2$ and one of type $\{3,60\}$ for $Co_1$.

\subsection{Fischer groups}
The group $Fi_{24}'$ has $Out=2$.
In $Fi_{24}'$, take $\sigma$ of order 29. We have $N_{Aut(Fi_{24}')}(\langle\sigma\rangle) = C_{29}:C_{28}$. This  contains 29 involutions, all in $Fi_{24}'$.
These involutions belong to class $2B$, and their centralizer $C$ in $Fi_{24}$ satisfies $i_2(C) = 5741695$. Therefore $i_2(C) \cdot 29 << i_2(Fi_{24}')$ and so $Fi_{24}'$ is the automorphism group of at least one chiral polyhedron. 

The group $Fi_{23}$ has $Out=1$, and for an element $\sigma$ of order 23, $N_{Fi_{23}}(\langle\sigma\rangle)=C_{23}:C_{11}$. This latter group does not contain any involutions. Moreover, there is obviously at least one involution that will, with $\sigma$, generate the whole of $Fi_{23}$.

Finally, a non-exhaustive computer search with {\sc Magma} gives a chiral polyhedron of type $\{11,13\}$ for $Fi_{22}$.

\subsection{The Monster and the Baby Monster}
The Monster $M$ has $Out=1$ and a unique class of maximal subgroups of order divisible by 71, namely subgroups $L_2(71)$. Moreover, $N_{L_2(71)}(C_{71}) = C_{71}:C_{35}$, a group of odd order. Therefore, no element of order 71 in $M$ is conjugate to its inverse.
Take $x$ of order 71 in $M$. The $x$ is contained in a unique subgroup $L_2(71)$ of $M$. Therefore, picking $t$ an involution of $M$ not in the $L_2(71)$ containing $\langle x\rangle$, we have $\langle t,x\rangle = M$. The pair $x,t$ satisfies (i)--(iii) of Theorem~\ref{main1}.

The Baby Monster $BM$ has $Out=1$ and a unique class of maximal subgroups containing elements of order 47. Take $x$ an element of order $47$ in $BM$. We have $N_{BM}(\langle x\rangle)=C_{47}:C_{23}$. Any involution $t$ of $BM$ will give $\langle x,t\rangle = BM$. The pair $x,t$ satisfies (i)--(iii) of Theorem~\ref{main1}.
 
 \subsection{The remaining sporadics}
 
The Thompson group $Th$ has $Out=1$,
 and for an element $x$ of order 31, $N_{Th}(\langle x\rangle)=C_{31}:C_{15}$. This latter group does not contain any involutions. Moreover, there is obviously at least one involution that will, with $x$, generate the whole of $Th$.
  
The Lyons group $Ly$ has $Out=1$ and a unique class of maximal subgroups of order divisible by 37, namely groups $C_{37}:C_{18}$. Moreover, $Ly$ has a unique conjugacy class of involutions, and these have centralizers $2\cdot A_{11}$. We have $i_2(2\cdot A_{11}) = 34650$. Therefore $i_2(2\cdot A_{11}) \cdot 37 << i_2(Ly)$ and $Ly$ is the automorphism group of at least one chiral polyhedron.

For the O'Nan group $O'N$, we refer to~\cite{CL2013} where all possible types of chiral polyhedra for $O'N$ have been determined.

 Finally, a non-exhaustive computer search with {\sc Magma} gives a chiral polyhedron of type $\{19,20\}$ for $HN$,  one of type $\{5,7\}$ for $He$, one of type $\{14,29\}$ for $Ru$, one of type $\{13,24\}$ for $Suz$, one of type $\{11,15\}$ for $McL$, and one of type $\{11,6\}$ for $HS$.

\vspace{4mm}
This completes the proof of Theorem \ref{main1}.

\section{Concluding remarks}
It would be interesting to prove similar results to Theorem~\ref{main1} and also that of Nuzhin described in the Introduction, for almost simple (rather than just simple) groups.
Some results  in this vein are known. For instance in~\cite{HL2014}, it is proved that every almost simple group $G$ with socle $Sz(q)$ is the automorphism group of at least one abstract chiral polyhedron.
And in~\cite{LM2015}, it is shown that the only almost simple groups with socle $L_2(q)$ that are not automorphism groups of abstract chiral polyhedra are $L_2(q)$, $PGL(2,q)$, and a group of the form $L_2(9).2$.

\section{Acknowledgements}
This research was supported by a Marsden grant (UOA1218) of the Royal Society of New Zealand.

\bibliographystyle{plain}

\end{document}